\DeclareMathOperator*{\TIMES}{\LARGE\pmb\times}
\DeclareMathOperator*{\Times}{\Large\pmb\times}
\newtheorem{theorem}{Theorem}[section]
\newtheorem{proposition}[theorem]{Proposition}
\theoremstyle{remark}
\newtheorem{remark}[theorem]{Remark}
\newtheorem*{example}{Example}
\begin{document}

\title[Joint functional calculus in algebra of polynomial distributions]
    {Joint functional calculus in algebra of polynomial tempered distributions}

\author{S. V. Sharyn}
\address{Department of Mathematics and Computer Sciences, Precarpathian National University,
           57 Shevchenka str., Ivano-Frankivsk, 76018, Ukraine}
\email{sharyn.sergii@gmail.com}

\subjclass[2010]{Primary 46H30; Secondary 47A60, 46F05}
\date{29/12/2014;\ \  Revised 21/04/2015}
\keywords{Functional calculus for generators of operator
semigroups, polynomials on locally convex spaces, infinite
parameter operator semigroups}

\begin{abstract}
  In this paper we develop a functional calculus for a countable system
  of generators of contraction strongly continuous semigroups. As a
  symbol class of such calculus we use the algebra of polynomial
  tempered distributions. We prove a differen\-tial property of
  constructed calculus and describe its image with the help of the
  commutant of polynomial shift semigroup. As an application, we
  consider a function of countable set of second derivative operators.
\end{abstract}

\maketitle

\section*{Introduction}

A functional calculus is a theory that studies how to construct
functions depending on operators (roughly speaking, how to
``substitute'' an operator instead of the variable in a function).
Also it is said that the functional calculus for some (not necessary
bounded) operator $A$ on a Banach space is a method of associating an
operator $f(A)$ to a function $f$ belonging to a topological algebra
$\mathcal A$ of functions. If we have such a method then, actually, we
have a continuous homomorphism from the algebra $\mathcal A$ to a
topological algebra of operators.  So, in this terminology the
functional calculus can be identified with the above-mentioned
homomorphism (but as a theory the functional calculus studies such
homomorphisms).

There are many different approaches to construct a functional
calculus for one ope\-rator acting on a Banach space. For
Riesz-Dunford functional calculus, based on the Cauchy formula, we
refer the reader to the book \cite{Haase}. Such a functional
calculus has applications, in particular, in the spectral theory
of elliptic differential equations and maximal regularity of
parabolic evolution equations (see e.g. \cite{Kalton, Kriegler}).
Another method, based on the Laplace transformation, was developed
in \cite{HillePhillips57}. This method is known as the
Hille-Phillips functional calculus. It has many helpful
applications, in particular, in hydrology (see \cite{HaaseAppl}
and the references given there). Such type of calculus is the main
object of investigation in this article.

The Hille-Phillips functional calculus for functions of several
variables is well developed (see e.g. \cite{LopSharSMJ,Mirotin}).
The case of functions of infinitely many variables is less
studied. We mention the book \cite{Samoilenko} that is devoted to
spectral questions (among them there is a functional calculus) of
countable families of self-adjoint operators on a Hilbert space.
The main goal of this article is the construction of
Hille-Phillips type functional calculus for countable set of
generators of contraction strongly continuous semigroups, acting
on a Banach space.

The Borchers-Uhlmann algebra, i.e. the tensor algebra over the
space of rapidly decreasing functions with tensor product as a
multiplication was effectively used in quantum field theory (see
e.g. \cite{Borchers88,Borchers62,Uhlmann}). Such algebras have an
equivalent structure of polynomials with pointwise multiplication
\cite{Din99}. It was an incitement to research the problems
connected with the polynomially extended cross-correlation of
(ultra)distributions, differentiations and the corresponding
functional calculus \cite{LopSharTopology}. Elements of the
Borchers-Uhlmann algebra can be treated as functionals on spaces
of smooth functions of infinite many variables. So, we can
understand this algebra as space of polynomial distributions with
tensor structure. In this paper we would like to consider this
structure for a special case.

A Fr\'{e}chet-Schwartz space (briefly, $(FS)$ space) is one that
is Fr\'{e}chet and Schwartz simultaneously (see \cite{Zharinov}).
Let $\mathcal S_+$ be the space of rapidly decreasing functions on
$[0,+\infty)$ and $\mathcal S'_+$ be its dual. It is known (see
e.g. \cite{Komatsu,Smirnov}) that these spaces are nuclear
Fr\'{e}chet-Schwartz and dual Fr\'{e}chet-Schwartz spaces ($(DFS)$
for short), respectively. These facts are crucial for our
investigation. The main objects of investigation are the algebras
${\mathcal P}(\mathcal S'_+)$ and ${\mathcal P}'(\mathcal S'_+)$
of polynomial test and generalized functions, which have the
tensor structures of the forms
$\bigoplus\nolimits_{\operatorname{fin}}\mathcal S_+^{\hat\otimes
n}$ and $\Times\mathcal S_+^{\prime\hat\otimes n}$, respectively.

Using the Grothendieck technique \cite{Grot55}, we introduce the
polynomial extension of cross-correlation and prove the Theorem
\ref{theorCrossCorel} about isomorphic representation of the
algebra of polynomial distributions onto the commutant of
polynomial shift semigroup (see \eqref{polynomialshift}) in the
space of linear continuous operators on
$\bigoplus_{\operatorname{fin}}\mathcal S_+^{\hat\otimes n}$. In
Proposition \ref{prop33} we prove the differential property of
polynomial cross-correlation, which is essentially used in main
Theorem \ref{main}.

In the section \ref{sec.alebra} we extend the generalized Fourier
transformation onto the spaces of polynomial test and generalized
functions. Images of this map we understand as functions and
functionals of infinite many variables (see Remarks
\ref{infinitemanyvar} and \ref{rem2}), respectively.

The constructed polynomial test and generalized functions we apply to
an operator semigroup with infinitely many parameters. Namely, we
construct the functional calculus for countable system of generators
of contraction $C_0$-semigroups and prove its properties (see Remark
\ref{rem41} and Theorem \ref{main}).  This calculus is an
infinite-dimensional analogue of the one constructed in
\cite{LopSharSMJ}.  As an example we consider the infinite-dimensional
Gaussian semigroup, which is generated by a countable set of second
derivative operators.

Finally we note that there are other known and widely used
infinite-dimensional ge\-ne\-ralizations of classical spaces of
distributions \cite{BerKond,BerSam}. For example, white noise
analysis is based on an infinite dimensional analogue of the
Schwartz distribution theory (see e.g.
\cite{Hida_Kuo_Potthoff_Streit93,Kachanovsky,KONDRATIEV_STREIT_WESTERKAMP_Yan95,QuantumField}).

\section{Background on polynomial tempered distributions}

In what follows ${L}(X,Y)$ denotes the space of all continuous linear
operators from a locally convex space $X$ into another such space $Y$,
endowed with the topology of uniform convergence on bounded subsets of
$X$. Let ${L}(X):={L}(X,X)$ and $I_X$ denotes the identity operator in
${L}(X)$. The dual space $X':={L}(X,\mathbb C)$ is endowed with strong
topology. The pairing between elements of $X'$ and $X$ we denote by
$\langle \,\cdot\,, \,\cdot\,\rangle$.

Let $X^{\hat\otimes n}$, $n\in\mathbb N$, be the symmetric $n$th
tensor degree of $X$, completed in the projective tensor topology.
For any $x\in X$ we denote $x^{\otimes n}:=
\underbrace{x\otimes\dots\otimes x}_n\in X^{\hat\otimes n}$,
$n\in\mathbb N$. Set $X^{\hat\otimes 0}:=\mathbb C$, $x^{\otimes
0}:=1\in\mathbb C$.

For any $A\in L(X)$ its tensor power $A^{\otimes n}\in
L(X^{\hat\otimes n})$, $n\in\mathbb N$, is defined as a linear
continuous extension of the map $x^{\otimes n} \longmapsto
(Ax)^{\otimes n}$, where $x\in X$.  It follows from results of
\cite{BSU96} that such an extension exists if $X$ is a projective or
inductive limit of separable Hilbert spaces.  In this article we
consider only such spaces.

Let $\mathcal S_+$ be the Schwartz space of rapidly decreasing functions
on $\mathbb R_+:=[0,+\infty)$ and  $\mathcal S'_+$ be
its dual space of tempered distributions supported by $\mathbb R_+$.
Note that strong topology on $\mathcal S'_+$ coincides
with the Mackey topology and topology of inductive limit (see \cite[IV.4, IV.5]{Schaefer}).
Let $\delta_t$ be the Dirac delta functional concentrated
at a point $t\in\mathbb R_+$.
It is known \cite{Vladimirov} that $\mathcal S'_+$
is a topological algebra with the unit $\delta:=\delta_0$
under the convolution, defined as
\[
\langle f*g, \varphi\rangle = \langle f(s), \langle g(t),
\varphi(s+t) \rangle \rangle,\quad f, g\in\mathcal S'_+,\quad
\varphi\in\mathcal{S}_+.
\]
Note that here and everywhere the notation $f(t)$ shows that
a functional $f$ acts on a test function in the variable $t$.

From the duality theory as well as from the theory of nuclear spaces
it follows that $\mathcal S_+$ is a nuclear $(FS)$ space, and
$\mathcal S'_+$ is a nuclear $(DFS)$ space.

To define the locally convex space ${\mathcal P}({}^n\!\mathcal S'_+)$ of $n$-homogeneous
polynomials on $\mathcal S'_+$ we use the canonical topological linear isomorphism
\[
{\mathcal P}({}^n\!\mathcal S'_+)\simeq (\mathcal S_+^{\prime\hat\otimes n})'
\]
described in \cite{Din99}. Namely, given a functional
$p_n\in(\mathcal S_+^{\prime\hat\otimes n})'$, we define an
$n$-homogeneous polynomial $P_n\in{\mathcal P}({}^n\!\mathcal S'_+)$ by
$P_n(f):=p_n(f^{\otimes n})$,
$f\in \mathcal S'_+$.
We equip ${\mathcal P}({}^n\!\mathcal S'_+)$ with the locally convex topology $\mathfrak{b}$ of
uniform convergence on bounded sets in $\mathcal S'_+$. Set
${\mathcal P}({}^0\!\mathcal S'_+):=\mathbb{C}$.
The space $\mathcal{P}(\mathcal S'_+)$ of all continuous polynomials on $\mathcal S'_+$ is
defined to be the complex linear span of all ${\mathcal P}({}^n\!\mathcal S'_+)$, $n\in\mathbb Z_+$,
endowed with  the topology $\mathfrak{b}$. Let ${\mathcal P}'(\mathcal S'_+)$ mean the strong dual of
$\mathcal{P}(\mathcal S'_+)$ and
\[
\Gamma(\mathcal S_+):=
\mathop{\bigoplus\nolimits_{\operatorname{fin}}}\limits_{\hskip-10pt
n\in\mathbb Z_+}\mathcal S_+^{\hat\otimes n} \subset
\bigoplus_{n\in\mathbb Z_+}\mathcal S_+^{\hat\otimes n}
\quad\text{and}\quad \Gamma(\mathcal S'_+):=\TIMES_{n\in\mathbb
Z_+}\mathcal S_+^{\prime\hat\otimes n}.
\]
Note that we consider only the case when elements of the direct sum
consist of a finite but not fixed number of terms. For simplicity of
notation we write $\Gamma(\mathcal S_+)$ instead of commonly used
$\Gamma_{\operatorname{fin}}(\mathcal S_+)$.

We have the following assertion (see also \cite[Proposition
2.1]{LopSharTopology}).

\vspace*{-2pt}
\begin{proposition}\label{propTensStruct}
There exist linear topological isomorphisms
\[
\Upsilon : {\mathcal P}(\mathcal S'_+) \longrightarrow
\Gamma(\mathcal S_+), \quad \Psi : {\mathcal P}'(\mathcal S'_+)
\longrightarrow \Gamma(\mathcal S'_+).
\]
\end{proposition}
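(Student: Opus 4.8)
The plan is to build the isomorphism $\Upsilon$ one homogeneous degree at a time and then patch the pieces together over the direct sum, obtaining $\Psi$ afterwards by transposition. The starting point is the canonical identification $\mathcal{P}({}^n\!\mathcal S'_+)\simeq(\mathcal S_+^{\prime\hat\otimes n})'$ recalled above from \cite{Din99}, which is already a topological isomorphism once $\mathcal{P}({}^n\!\mathcal S'_+)$ carries the topology $\mathfrak b$ and the dual carries its strong topology. It therefore suffices to identify $(\mathcal S_+^{\prime\hat\otimes n})'$ with $\mathcal S_+^{\hat\otimes n}$ at each degree.

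First I would exploit that $\mathcal S_+$ is a nuclear $(FS)$ space, hence reflexive, and that the symmetric completed projective tensor power $\mathcal S_+^{\hat\otimes n}$ of a nuclear $(FS)$ space is again nuclear $(FS)$. For such spaces the Grothendieck theory of tensor products \cite{Grot55} gives that the strong dual of a completed projective tensor product is the completed projective tensor product of the strong duals, so that $(\mathcal S_+^{\hat\otimes n})'\simeq\mathcal S_+^{\prime\hat\otimes n}$ as locally convex spaces, the symmetry being preserved throughout. Dualizing once more and invoking reflexivity yields $(\mathcal S_+^{\prime\hat\otimes n})'\simeq\mathcal S_+^{\hat\otimes n}$, and composition with the isomorphism of \cite{Din99} produces a topological isomorphism $\Upsilon_n\colon\mathcal{P}({}^n\!\mathcal S'_+)\to\mathcal S_+^{\hat\otimes n}$ for every $n\in\mathbb Z_+$.

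Next I would assemble these degreewise maps. By definition $\mathcal{P}(\mathcal S'_+)$ is the linear span of the $\mathcal{P}({}^n\!\mathcal S'_+)$ with the topology $\mathfrak b$, and since each element of the span involves only finitely many nonzero homogeneous components this span is the algebraic direct sum. The step that requires care is the verification that $\mathfrak b$ restricted to this span coincides with the locally convex direct sum topology transported by the $\Upsilon_n$; here one checks that a subset of the span is $\mathfrak b$-bounded precisely when its projections onto the finitely many active degrees are bounded, so that $\Upsilon:=\bigoplus_n\Upsilon_n$ is a topological isomorphism of $\mathcal{P}(\mathcal S'_+)$ onto $\Gamma(\mathcal S_+)=\bigoplus_{\operatorname{fin}}\mathcal S_+^{\hat\otimes n}$.

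Finally, $\Psi$ is obtained by dualizing. Taking strong duals turns a locally convex direct sum into the corresponding direct product, so $\mathcal{P}'(\mathcal S'_+)=\mathcal{P}(\mathcal S'_+)'\simeq\Gamma(\mathcal S_+)'\simeq\TIMES_{n\in\mathbb Z_+}(\mathcal S_+^{\hat\otimes n})'$, and applying the degreewise duality $(\mathcal S_+^{\hat\otimes n})'\simeq\mathcal S_+^{\prime\hat\otimes n}$ already established gives the isomorphism onto $\Gamma(\mathcal S'_+)$; up to these identifications $\Psi$ is the transpose of $\Upsilon^{-1}$. I expect the main obstacle to lie not in the algebra but in the topological bookkeeping: keeping the $(FS)$/$(DFS)$ and nuclearity hypotheses in force at every tensor power so that the tensor-dual interchange and the direct-sum/direct-product duality hold topologically rather than merely algebraically, and confirming that $\mathfrak b$ is exactly the direct sum topology on the span.
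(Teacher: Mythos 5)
Your degreewise reductions are sound and are essentially the standard route: $\mathcal S_+$ is nuclear $(FS)$, hence so is each $\mathcal S_+^{\hat\otimes n}$, Grothendieck's duality for nuclear Fr\'echet spaces gives $(\mathcal S_+^{\hat\otimes n})'\simeq\mathcal S_+^{\prime\hat\otimes n}$ topologically, reflexivity then yields $(\mathcal S_+^{\prime\hat\otimes n})'\simeq\mathcal S_+^{\hat\otimes n}$, and composing with the identification $\mathcal P({}^n\!\mathcal S'_+)\simeq(\mathcal S_+^{\prime\hat\otimes n})'$ produces the maps $\Upsilon_n$; likewise the duality $\big(\bigoplus_n X_n\big)'\simeq\prod_n X_n'$ used to get $\Psi$ is correct. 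Note that the paper itself offers no proof of this proposition: it is quoted from \cite[Proposition~2.1]{LopSharTopology}, so the comparison can only be with the route that reference takes, and up to the assembly step your sketch follows it.

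The genuine gap is precisely at the step you flagged, and your proposed fix is false as stated. You claim that a subset of the span is $\mathfrak b$-bounded exactly when it has finitely many active degrees with bounded projections. Take $0\neq\varphi\in\mathcal S_+$ and set $P^{(k)}(f):=\langle f,\varphi\rangle^{k}/k!$, a continuous $k$-homogeneous polynomial corresponding to $\varphi^{\otimes k}/k!\in\mathcal S_+^{\hat\otimes k}$. For every bounded $B\subset\mathcal S'_+$ one has $\sup_{f\in B}|P^{(k)}(f)|=M_B^{k}/k!$ with $M_B:=\sup_{f\in B}|\langle f,\varphi\rangle|<\infty$, so the family $\{P^{(k)}:k\in\mathbb N\}$ is $\mathfrak b$-bounded although it meets every homogeneous degree; in a locally convex direct sum a bounded set must lie in a finite partial sum. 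So $\mathfrak b$-bounded sets are not supported in finitely many degrees, and your bounded-set bookkeeping cannot identify $\mathfrak b$ with the direct-sum topology. Two further points compound this: even if the bounded sets of the two topologies did coincide, that would give equality of topologies only after proving $\big(\mathcal P(\mathcal S'_+),\mathfrak b\big)$ bornological, which you do not address; and the hard direction of continuity --- dominating a direct-sum seminorm $P\mapsto\sum_n\sup_{B_n}|P_n|$ by a single seminorm $\sup_{B}|P|$ --- cannot be handled by Cauchy estimates alone, since the estimate $\sup_{B_n}|P_n|\le r^{-n}\sup|P|$ over $r$ times the balanced hull of $B_n$ would require a single bounded set absorbing all the sets $rB_n$, and in the $(DFS)$ space $\mathcal S'_+$ countably many bounded sets need not sit in a common step. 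This is exactly the point at which the cited reference must (and your sketch does not) do real work, so as written your argument for the topological identification of $\mathcal P(\mathcal S'_+)$ with $\Gamma(\mathcal S_+)$, and hence also the dualized statement for $\Psi$, is incomplete.
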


Elements of the spaces
${\mathcal P}(\mathcal S'_+)$ and ${\mathcal P}'(\mathcal S'_+)$
we call the polynomial test functions and polynomial distributions, respectively.
In what follows elements of the spaces $\Gamma(\mathcal S_+)$ and $\Gamma(\mathcal S'_+)$
will be written as
\[
\bigoplus_{n=0}^m
p_n=(p_0,p_1,\dots,p_m,0,\dots)\in\Gamma(\mathcal S_+)
\quad\text{and}\quad \TIMES_{n\in\mathbb Z_+}
f_n=(f_0,f_1,\dots,f_n,\dots)\in\Gamma(\mathcal S'_+)
\]
for some $m\in\mathbb N$,
where $p_n\in\mathcal S_+^{\hat\otimes n}$ and $f_n\in \mathcal S_+^{\prime\hat\otimes n}$
for all $n\in\mathbb Z_+$. To simplify, we write $\big(p_n\big)$ and $\big(f_n\big)$ instead of
$\bigoplus_{n=0}^m p_n$ and $\Times_{n\in\mathbb Z_+} f_n$, respectively.

Note that the following systems of elements
\begin{equation}\label{totalsystem}
\big\{\big(\varphi^{\otimes n}\big): \varphi\in\mathcal S_+\big\},
\quad \big\{\big(f^{\otimes n}\big):f\in\mathcal S'_+\big\}
\end{equation}
are total in $\Gamma(\mathcal S_+)$ and $\Gamma(\mathcal S'_+)$, respectively.

Let us define the operation
\[
\big(f^{\otimes n}\big)\circledast \big(g^{\otimes n}\big):= \big((f*g)^{\otimes n}\big)
\]
for elements from the total subset \eqref{totalsystem} of the space
$\Gamma(\mathcal S'_+)$ and extend it to the whole space by linearity
and continuity.  It is obvious that $\Gamma(\mathcal S'_+)$ is an
algebra relative to the operation $\circledast$ with the unit element
$\big(\delta^{\otimes n}\big)$. Since $\Gamma(\mathcal S_+)$ is
continuously and densely embedded into $\Gamma(\mathcal S'_+)$ (see
\cite{LopSharTopology}) and the space $\mathcal S_+$ is a convolution
algebra (see \cite{Vladimirov}), the space $\Gamma(\mathcal S_+)$
becomes an algebra with respect to $\circledast$.

For any $K\in {L}(\mathcal{S}_+)$ let us define an operator
$K^{\otimes}\in{L}\big(\Gamma(\mathcal S_+)\big)$ as follows:
\begin{equation}\label{Kotimes}
K^{\otimes}:=\big(K^{\otimes n}\big):
\boldsymbol p=(p_n)
\quad\longmapsto\quad
K^{\otimes}\boldsymbol p:=\big(K^{\otimes n}p_n\big),
\end{equation}
where $K^{\otimes 0}:=I_{\mathbb C}$ and each operator $K^{\otimes
n}\in{L}(\mathcal S_+^{\hat\otimes n})$ is defined as a linear
continuous extension of the map $\varphi^{\otimes n} \longmapsto
(K\varphi)^{\otimes n}$, with $\varphi\in \mathcal S_+$,
$n\in\mathbb N$.

Consider the one-parameter $C_0$-semigroup of shifts,
\begin{equation*}
T\colon\mathbb{R}_+\ni s\longmapsto T_s\in{L}(\mathcal{S}_+),
\quad
T_s\varphi(t):=\varphi(t+s),\quad t\in\mathbb R_+,
\quad \varphi\in\mathcal{S}_+.
\end{equation*}
Hence, the map $T^{\otimes n}: \mathbb{R}_+\ni s\longmapsto
T_s^{\otimes n}\in {L}(\mathcal S_+^{\hat\otimes n})$ is well defined.
It easy to check that $T^{\otimes n}$ is a one-parameter semigroup of
operators.  Denote $T^{\otimes}_s:= \big(T^{\otimes n}_s\big)$,
$s\in\mathbb R_+$. The mapping
\begin{equation}\label{polynomialshift}
T^{\otimes}:\mathbb R_+\ni s\longmapsto T^{\otimes}_s\in{L}\big(\Gamma(\mathcal S_+)\big)
\end{equation}
is called the polynomial shift semigroup.

The cross-correlation of a distribution $f\in\mathcal S'_+$ and
a function $\varphi\in\mathcal S_+$ is defined to be
the function
\[(f\star\varphi)(s):=\langle f, T_s\varphi\rangle=
\langle f(t), \varphi(t+s)\rangle.\]
Similarly to \cite{SharCrosCor} it is easy to prove that
\begin{equation}\label{fgstarphi}
f\star\varphi\in\mathcal S_+,\quad f\star
T_s\varphi=T_s(f\star\varphi)\quad\text{and}\quad
(f*g)\star\varphi=f\star(g\star\varphi)
\end{equation}
for any $s\in\mathbb R_+$, $f,g\in\mathcal S'_+$ and $\varphi\in\mathcal S_+$.
It follows that the cross-correlation operator
defined by \[K_f\colon\varphi\longmapsto f\star\varphi\]
belongs to $ L(\mathcal S_+)$ for any $f\in\mathcal S'_+$.
From \eqref{Kotimes} it follows that
\begin{equation}\label{Kfotimesn}
K_{\boldsymbol f}^\otimes:=\big(K_{f_n}^{\otimes n}\big)\in
{L}\big(\Gamma(\mathcal S_+)\big)\quad
\text{and}\quad K_{f_n}^{\otimes n}\in{L}(\mathcal S_+^{\hat\otimes n}),
\end{equation}
where
$\boldsymbol f:=(f_n)\in\Gamma(\mathcal S'_+)$ with
$f_n\in \mathcal S_+^{\prime\hat\otimes n}$,
$n\in\mathbb Z_+$.

The cross-correlation of a polynomial distribution
$\boldsymbol f=(f_n)\in\Gamma(\mathcal S'_+)$ and a polynomial test function
$\boldsymbol p=(p_n)\in\Gamma(\mathcal S_+)$ is given by
\[
\boldsymbol f\star \boldsymbol p:=
K_{\boldsymbol f}^{\otimes}\boldsymbol p=\big(K_{f_n}^{\otimes n}p_n\big).
\]

\begin{proposition}
For any $\boldsymbol f\in\Gamma(\mathcal S'_+)$ and
$\boldsymbol p\in\Gamma(\mathcal S_+)$ the cross-correlation
$\boldsymbol f\star \boldsymbol p$ is a polynomial test function belonging to
$\Gamma(\mathcal S_+)$.
\end{proposition}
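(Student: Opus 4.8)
The plan is to check that the sequence $\boldsymbol f\star\boldsymbol p=(K_{f_n}^{\otimes n}p_n)$ fulfils the two conditions defining membership in $\Gamma(\mathcal S_+)$: only finitely many components may be nonzero, and each component must belong to $\mathcal S_+^{\hat\otimes n}$. The finite-support condition is immediate. As $\boldsymbol p=(p_n)$ lies in $\Gamma(\mathcal S_+)$, there is an $m$ with $p_n=0$ for all $n>m$; the linearity of each $K_{f_n}^{\otimes n}$ then gives $K_{f_n}^{\otimes n}p_n=0$ for those indices, so at most the components with $n\le m$ of $\boldsymbol f\star\boldsymbol p$ survive, whatever $\boldsymbol f$ may be.

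The substance of the proposition is the componentwise assertion $K_{f_n}^{\otimes n}p_n\in\mathcal S_+^{\hat\otimes n}$, that is, that $K_{f_n}^{\otimes n}$ maps $\mathcal S_+^{\hat\otimes n}$ into itself, as recorded in \eqref{Kfotimesn}. I would prove this by reducing to the single-variable cross-correlation through the tensor structure. On the total system \eqref{totalsystem}, taking $f_n=f^{\otimes n}$ and $p_n=\varphi^{\otimes n}$ gives $K_{f^{\otimes n}}^{\otimes n}\varphi^{\otimes n}=(K_f\varphi)^{\otimes n}=(f\star\varphi)^{\otimes n}$, and by the first relation in \eqref{fgstarphi} the factor $f\star\varphi$ belongs to $\mathcal S_+$, whence $(f\star\varphi)^{\otimes n}\in\mathcal S_+^{\hat\otimes n}$. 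Thus the componentwise claim holds on the dense subset spanned by the total system.

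The step I expect to demand the most care is the passage from this dense subset to arbitrary $p_n\in\mathcal S_+^{\hat\otimes n}$ and $f_n\in\mathcal S_+^{\prime\hat\otimes n}$. For fixed $f$ the tensor power $K_f^{\otimes n}$ is a continuous operator on $\mathcal S_+^{\hat\otimes n}$ by the construction \eqref{Kotimes}, which is legitimate precisely because $\mathcal S_+$ is an inductive limit of separable Hilbert spaces (cf.\ \cite{BSU96}); hence $K_f^{\otimes n}$ sends all of $\mathcal S_+^{\hat\otimes n}$ into itself. The delicate point, and the one I expect to be the main obstacle, is the linear and continuous dependence on the distributional argument: since $f\mapsto K_f^{\otimes n}$ is homogeneous of degree $n$ rather than linear, its passage to a \emph{linear} map on $\mathcal S_+^{\prime\hat\otimes n}$ rests on the nuclear tensor-product structure of these spaces. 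Granting this, the correspondence $f^{\otimes n}\mapsto K_f^{\otimes n}$ extends to a continuous linear map $\mathcal S_+^{\prime\hat\otimes n}\ni f_n\mapsto K_{f_n}^{\otimes n}\in L(\mathcal S_+^{\hat\otimes n})$, so that every $K_{f_n}^{\otimes n}$ is again a continuous operator of $\mathcal S_+^{\hat\otimes n}$ into itself. Combining this componentwise membership with the finite-support bound established above yields $\boldsymbol f\star\boldsymbol p\in\Gamma(\mathcal S_+)$.
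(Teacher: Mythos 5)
Your proof is correct and follows essentially the same route as the paper's: both arguments reduce, via the linearity and continuity of $K_{f_n}^{\otimes n}$ and the totality of the system \eqref{totalsystem}, to the computation $K_{f^{\otimes n}}^{\otimes n}\varphi^{\otimes n}=(f\star\varphi)^{\otimes n}$ and then invoke the first relation in \eqref{fgstarphi}. Your additional remarks --- the finite-support check and the caveat about extending $f^{\otimes n}\longmapsto K_f^{\otimes n}$ linearly and continuously in the distributional argument --- simply make explicit what the paper compresses into \eqref{Kfotimesn} and the phrase ``since the operators $K_{f_n}^{\otimes n}$ are linear and continuous''.
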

\begin{proof}
  Let $\boldsymbol f=(f_n)\in\Gamma(\mathcal S'_+)$ and $\boldsymbol
  p=(p_n)\in\Gamma(\mathcal S_+)$.  Since $\boldsymbol f \star
  \boldsymbol p= \big(K_{f_n}^{\otimes n}p_n\big)$ by definition, we
  only need to check that $K_{f_n}^{\otimes n}p_n\in\mathcal
  S_+^{\hat\otimes n}$ for all $n\in\mathbb Z_+$.  In the case $n=0$
  this is obvious. If $n=1$ we obtain that $\langle f_1, T_s
  p_1\rangle=(f_1\star p_1)(s)$ belongs to $\mathcal S_+$ (see
  \eqref{fgstarphi}). Consider the case $n>1$.  Since the operators
  $K_{f_n}^{\otimes n}$ are linear and continuous, it is sufficient to
  prove the statement for $f_n=f^{\otimes n}$ and
  $p_n=\varphi^{\otimes n}$ with $f\in\mathcal S'_+$ and
  $\varphi\in\mathcal S_+$. Then the function
\[
K_{f_n}^{\otimes n}p_n=
\big\langle f^{\otimes n}, T_s^{\otimes n}{\varphi}^{\otimes n}\big\rangle=
\big\langle f^{\otimes n}, (T_s\varphi)^{\otimes n}\big\rangle=
\big\langle f, T_s\varphi\big\rangle^{\otimes n}=
(f\star\varphi)^{\otimes n}
\]
belongs to $\mathcal S_+^{\hat\otimes n}$ as the $n$-th tensor power of
a function from $\mathcal S_+$.
\end{proof}

The commutant $\big[T^{\otimes}\big]^c\subset L\big(\Gamma(\mathcal
S_+)\big)$ of the polynomial shift semigroup $T^{\otimes}$ is defined
to be the set
\[
\big[T^{\otimes}\big]^c:=\big\{K^{\otimes}\in{L}\big(\Gamma(\mathcal S_+)\big):
K^{\otimes}\circ T^{\otimes}_s=T^{\otimes}_s\circ K^{\otimes},
\forall\, s\in\mathbb R_+\big\},
\]
where $K^{\otimes}$ is defined by \eqref{Kotimes}.
\begin{theorem}\label{theorCrossCorel}
The mapping
\[
\Gamma(\mathcal S'_+) \ni \boldsymbol f
\longmapsto
K_{\boldsymbol f}^{\otimes}
\in  L\big(\Gamma(\mathcal S_+)\big)
\]
is an algebraic isomorphism from the algebra
$\big\{\Gamma(\mathcal S'_+), \circledast\big\}$ onto the
commutant $\big[T^{\otimes}\big]^c$ of the semigroup $T^{\otimes}$
in the algebra $\big\{ L\big(\Gamma(\mathcal
S_+)\big),\circ\big\}$. In particular, the following relation
holds:
\begin{equation*}
K_{\boldsymbol f\circledast \boldsymbol g}^{\otimes}=
K_{\boldsymbol f}^{\otimes}\circ K_{\boldsymbol g}^{\otimes},
\quad \boldsymbol f,\boldsymbol g\in\Gamma(\mathcal S'_+).
\end{equation*}
\end{theorem}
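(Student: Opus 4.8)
The plan is to verify, in turn, that the correspondence $\boldsymbol f\mapsto K_{\boldsymbol f}^{\otimes}$ is multiplicative, that it is injective, that its range lies inside the commutant, and finally that every element of the commutant arises in this way; the last step is the genuine obstacle. First I would check the relation $K_{\boldsymbol f\circledast\boldsymbol g}^{\otimes}=K_{\boldsymbol f}^{\otimes}\circ K_{\boldsymbol g}^{\otimes}$ on the total system \eqref{totalsystem}. For $\boldsymbol f=(f^{\otimes n})$ and $\boldsymbol g=(g^{\otimes n})$ the definition of $\circledast$ gives $\boldsymbol f\circledast\boldsymbol g=((f*g)^{\otimes n})$, and applying the $n$-th component to $\varphi^{\otimes n}$ reduces both sides to $((f*g)\star\varphi)^{\otimes n}=(f\star(g\star\varphi))^{\otimes n}$, which is precisely the associativity relation $(f*g)\star\varphi=f\star(g\star\varphi)$ of \eqref{fgstarphi} raised to the $n$-th tensor power. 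Since all maps involved are linear and continuous and the system \eqref{totalsystem} is total, the relation extends to all of $\Gamma(\mathcal S'_+)$, while linearity of $\boldsymbol f\mapsto K_{\boldsymbol f}^{\otimes}$ is built into \eqref{Kfotimesn}.

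For injectivity I would produce a left inverse at the level of pairings. Evaluation of a symmetric test function at the origin is the functional $\delta^{\otimes n}$, and on the total system one computes $\langle\delta^{\otimes n},K_{f_n}^{\otimes n}p_n\rangle=\langle f_n,p_n\rangle$: indeed, for $f_n=f^{\otimes n}$ and $p_n=\varphi^{\otimes n}$ the left-hand side equals $((f\star\varphi)(0))^{n}=\langle f,\varphi\rangle^{n}=\langle f^{\otimes n},\varphi^{\otimes n}\rangle$, and both sides are linear and continuous in $(f_n,p_n)$. Hence $K_{\boldsymbol f}^{\otimes}=0$ forces $\langle f_n,p_n\rangle=0$ for all $p_n$ and all $n$, i.e. $\boldsymbol f=0$. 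That the range lies in the commutant follows from the intertwining relation $f\star T_s\varphi=T_s(f\star\varphi)$ of \eqref{fgstarphi}, which says $K_fT_s=T_sK_f$; taking $n$-th tensor powers gives $K_{f^{\otimes n}}^{\otimes n}T_s^{\otimes n}=(K_fT_s)^{\otimes n}=(T_sK_f)^{\otimes n}=T_s^{\otimes n}K_{f^{\otimes n}}^{\otimes n}$, and extending by linearity and continuity shows $K_{\boldsymbol f}^{\otimes}\circ T_s^{\otimes}=T_s^{\otimes}\circ K_{\boldsymbol f}^{\otimes}$, so $K_{\boldsymbol f}^{\otimes}\in[T^{\otimes}]^{c}$.

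The hard part is surjectivity onto the commutant. Given $K^{\otimes}=(A_n)\in[T^{\otimes}]^{c}$, so that each $A_n$ commutes with $T_s^{\otimes n}$, I would reconstruct the symbol by setting $f_n:=\delta^{\otimes n}\circ A_n\in\mathcal S_+^{\prime\hat\otimes n}$, which is continuous because $A_n$ and $\delta^{\otimes n}$ are. For $n=1$ the commutation relation yields immediately $\langle f_1,T_s\varphi\rangle=(A_1T_s\varphi)(0)=(T_sA_1\varphi)(0)=(A_1\varphi)(s)$, whence $K_{f_1}=A_1$. The difficulty is to promote this to $n\ge2$: the desired identity $K_{f_n}^{\otimes n}=A_n$ amounts to recovering the value $(A_np_n)(s_1,\dots,s_n)$ at an arbitrary point from the action of $A_n$ at the origin, and a priori only the diagonal intertwining with $T_s^{\otimes n}$ is available. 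I expect this to be the main obstacle, to be overcome by restricting to the total elements $\varphi^{\otimes n}$ and combining the one-dimensional commutation statement $K_{f_1}=A_1$ just obtained (cf. \cite{SharCrosCor}) with the nuclearity and tensor structure furnished by Proposition \ref{propTensStruct}, so that the diagonal relation together with the multiplicative form of the total elements pins down $A_n$ as $K_{f_n}^{\otimes n}$. Once surjectivity is in hand, the multiplicativity established above shows the resulting bijection is an algebra isomorphism, and the displayed relation $K_{\boldsymbol f\circledast\boldsymbol g}^{\otimes}=K_{\boldsymbol f}^{\otimes}\circ K_{\boldsymbol g}^{\otimes}$ is recorded as its special case.
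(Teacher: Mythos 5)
Your multiplicativity step and the verification that the range lies in the commutant coincide with the paper's proof, and your injectivity argument via pairing with $\delta^{\otimes n}$ (evaluation at the origin) is correct and is a reasonable supplement, since the paper leaves injectivity essentially implicit. The genuine gap is exactly where you flagged it yourself: surjectivity for $n\ge 2$. The appeal to ``nuclearity and tensor structure furnished by Proposition \ref{propTensStruct}'' so that the diagonal relation ``pins down $A_n$ as $K_{f_n}^{\otimes n}$'' is a hope, not an argument, and under your reading of the commutant --- arbitrary graded operators $(A_n)$ with each $A_n$ commuting with the diagonal shifts $T_s^{\otimes n}$ --- the claim you are trying to prove is in fact false. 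For instance, at level $n=2$ the operator of multiplication by $q(s_1-s_2)$ with $q\in C_b^{\infty}(\mathbb R)$ even and nonconstant maps $\mathcal S_+^{\hat\otimes 2}$ continuously into itself and commutes with every diagonal shift $T_s\otimes T_s$, but it commutes with no non-diagonal shift $T_{s_1}\otimes T_{s_2}$, whereas every cross-correlation operator $K_{f_2}^{\otimes 2}$ commutes with all of these. So the diagonal commutant in your sense strictly contains the range of $\boldsymbol f\longmapsto K_{\boldsymbol f}^{\otimes}$, and no refinement of your sketch can close that gap.

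What you missed is the paper's definition of $\big[T^{\otimes}\big]^c$: it quantifies only over operators of the special form $K^{\otimes}=\big(K^{\otimes n}\big)$ from \eqref{Kotimes}, i.e.\ tensor powers of a \emph{single} $K\in L(\mathcal S_+)$, not arbitrary graded operators $(A_n)$. With that definition, the $n=1$ computation you already carried out closes the proof exactly as in the paper: given such a $K^{\otimes}$ in the commutant, define $h\in\mathcal S'_+$ by $\langle h,\varphi\rangle:=(K\varphi)(0)$; the level-one commutation gives $(h\star\varphi)(s)=\langle h, T_s\varphi\rangle=(KT_s\varphi)(0)=(K\varphi)(s)$, so $K=K_h$, and then every higher component is automatic, since $K^{\otimes n}=(K_h)^{\otimes n}=K_{h^{\otimes n}}^{\otimes n}$, whence $K^{\otimes}=K_{\boldsymbol h}^{\otimes}$ with $\boldsymbol h=(1,h,\dots,h^{\otimes n},\dots)$. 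In short, the obstacle you anticipated at $n\ge 2$ does not arise in the theorem as stated, because the commutant is by definition generated degreewise by one operator $K$, and your own reconstruction of $h$ from $K$ at the origin is precisely the paper's argument.
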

\begin{proof}
Since the operator $K_{\boldsymbol f}^{\otimes}$ is linear and continuous,
it is sufficient to consider only elements
from the total subsets \eqref{totalsystem}.
Let $\boldsymbol p=(\varphi^{\otimes n})\in\Gamma(\mathcal S_+)$ with
$\varphi\in\mathcal S_+$ and $\boldsymbol f=(f^{\otimes n}),
\boldsymbol g=(g^{\otimes n})\in\Gamma(\mathcal S'_+)$ with $f,g\in\mathcal S'_+$ be given.
From definitions of operations $\circledast$ and $\star$, as well as from
\eqref{fgstarphi}, we obtain
\[
\begin{split}
K_{\boldsymbol f\circledast \boldsymbol g}^{\otimes}\boldsymbol p=&
\Big(\big\langle (f*g)^{\otimes n}, T_s^{\otimes n}\varphi^{\otimes
n}\big\rangle\Big)=\Big(\big((f*g)\star\varphi\big)^{\otimes n}\Big)\\
=& \Big(\big(f\star (g\star\varphi)\big)^{\otimes n}\Big)=
\Big(\big\langle f, T_s(g\star\varphi)\big\rangle^{\otimes n}\Big)
=\Big(\big\langle f^{\otimes n}, T_s^{\otimes n}(g\star\varphi)^{\otimes
n}\big\rangle\Big)
=K_{\boldsymbol f}^{\otimes} K_{\boldsymbol g}^{\otimes}\boldsymbol p.
\end{split}
\]

Using \eqref{fgstarphi}, we obtain
\[
\begin{split}
K_{\boldsymbol f}^{\otimes} T^{\otimes}_s\boldsymbol p=&
\big(f^{\otimes n}\big)\star
\big(T_s^{\otimes n}\varphi^{\otimes n}\big)=
\big(f^{\otimes n}\big)\star
\big((T_s\varphi)^{\otimes n}\big)\\
=&\big((f\star T_s\varphi)^{\otimes n}\big)
=\big((T_s(f\star\varphi))^{\otimes n}\big)
=\big(T_s^{\otimes n}(f\star \varphi)^{\otimes n}\big)\\
=&\big(T_s^{\otimes n}K_{f_n}^{\otimes n}\varphi^{\otimes n}\big)=
T^{\otimes}_s K_{\boldsymbol f}^{\otimes}\boldsymbol p
\end{split}
\]
for all $s\in\mathbb R_+$.
Hence, the operator $K_{\boldsymbol f}^{\otimes}$ belongs to $\big[T^{\otimes}\big]^c$ for
all $\boldsymbol f\in\Gamma(\mathcal S'_+)$.

Conversely, let $K\in {L}(\mathcal{S}_+)$ be an operator such that
$K^{\otimes}\in\big[T^{\otimes}\big]^c$.  Let us show that there exists
$\boldsymbol h\in\Gamma(\mathcal S'_+)$ such that
$K^{\otimes}=K_{\boldsymbol h}^{\otimes}$.  Such an element is
$\boldsymbol h:=(1,h,\dots,h^{\otimes n},\dots)$, where the
distribution $h\in\mathcal S'_+$ is defined by the relation $\langle
h, \varphi\rangle:=(K\varphi)(0)$, $\varphi\in\mathcal S_+$.  Since
$(h\star\varphi)(s)=\langle h, T_s\varphi
\rangle=(KT_s\varphi)(0)=(K\varphi)(s)$, we obtain
\[
K_{\boldsymbol h}^{\otimes}\boldsymbol p=
\big((h\star\varphi)^{\otimes n}\big)=
\big((K\varphi)^{\otimes n}\big)=
\big(K^{\otimes n}\varphi^{\otimes n}\big)=
K^{\otimes}\boldsymbol p.
\]
Thus, $K^\otimes=K_{\boldsymbol h}^{\otimes}$.
So, the range of the mapping
$\Gamma(\mathcal S'_+) \ni \boldsymbol f \longmapsto
K_{\boldsymbol f}^{\otimes}
\in  L\big(\Gamma(\mathcal S_+)\big)$ coincides with the commutant
$\big[T^{\otimes}\big]^c$.
\end{proof}

Let $D$ mean the differential operator on $\mathcal S_+$. We use the
same letter $D$ to denote the operator of generalized differentiation
on $\mathcal S'_+$, i.e. $\langle Df,\varphi\rangle=-\langle f,
D\varphi\rangle$.

Let us define the operator ${\mathbb D}\in{L}\big(\Gamma(\mathcal S'_+)\big)$
as follows
\begin{equation*}
\begin{array}{cccc}
{\mathbb D}:
&
\displaystyle\Gamma(\mathcal S'_+)
&
\quad\longrightarrow\quad
&
\displaystyle\Gamma(\mathcal S'_+)\\
&
\displaystyle (1,f,\dots,f^{\otimes n},\dots)
&
\quad\longmapsto\quad
&
\displaystyle \Big(0,Df,\dots,\sum\limits_{j=1}^n
f^{\otimes(j-1)}\hat\otimes\, D f\,\hat\otimes\, f^{\otimes(n-j)},\dots\Big).
\end{array}
\end{equation*}
Its restriction onto $\Gamma(\mathcal S_+)$ acts as
\begin{equation*}
\begin{array}{cccc}
{\mathbb D}:
&
\displaystyle\Gamma(\mathcal S_+)
&
\quad\longrightarrow\quad
&
\displaystyle\Gamma(\mathcal S_+)\\
&
\displaystyle (1,\varphi,\dots,\varphi^{\otimes n},\dots)
&
\quad\longmapsto\quad
&
\displaystyle \Big(0,D\varphi,\dots,\sum\limits_{j=1}^n
\varphi^{\otimes(j-1)}\hat\otimes\, D \varphi\,\hat\otimes\, \varphi^{\otimes(n-j)},\dots\Big).
\end{array}
\end{equation*}
Analogically as in \cite{LopSharTopology} it is easy to prove that
${\mathbb D}$ is a continuous derivative.

\begin{proposition}\label{prop33}
For any $\boldsymbol f\in \Gamma(\mathcal S'_+)$ and $\boldsymbol
p\in \Gamma(\mathcal S_+)$ the following equality  holds:
\[({\mathbb D} \boldsymbol f)\star \boldsymbol p=
-\boldsymbol f\star ({\mathbb D}\boldsymbol p).\]
\end{proposition}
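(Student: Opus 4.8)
The plan is to exploit the bilinearity and continuity of both sides in the pair $(\boldsymbol f,\boldsymbol p)$ and thereby reduce the identity to the total systems \eqref{totalsystem}, after which everything collapses to a single one-dimensional relation. For fixed $\boldsymbol p$ the maps $\boldsymbol f\mapsto(\mathbb D\boldsymbol f)\star\boldsymbol p$ and $\boldsymbol f\mapsto-\boldsymbol f\star(\mathbb D\boldsymbol p)$ are linear and continuous, since $\mathbb D$ is a continuous operator and $f\mapsto K_f$ is linear; the same is true in the variable $\boldsymbol p$. Hence it suffices to establish the equality for $\boldsymbol f=(f^{\otimes n})$ and $\boldsymbol p=(\varphi^{\otimes n})$ with $f\in\mathcal S'_+$ and $\varphi\in\mathcal S_+$, and then extend by linearity and continuity in each argument successively. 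The trivial components $n=0$ (both sides vanish) and $n=1$ (the bare relation $(Df)\star\varphi=-f\star(D\varphi)$) are handled along the way.

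Next I would compute both sides componentwise using the definition of $\mathbb D$. The $n$-th component of $\mathbb D\boldsymbol f$ is $\sum_{j=1}^n f^{\otimes(j-1)}\hat\otimes Df\hat\otimes f^{\otimes(n-j)}$, and applying $K^{\otimes n}$ slotwise (see below) gives for the $n$-th component of $(\mathbb D\boldsymbol f)\star\boldsymbol p$ the expression $\sum_{j=1}^n(f\star\varphi)^{\otimes(j-1)}\hat\otimes(Df\star\varphi)\hat\otimes(f\star\varphi)^{\otimes(n-j)}$. Treating the other side in the same manner, and using that $\mathbb D$ acts on $\boldsymbol p$ by the analogous formula, the $n$-th component of $-\boldsymbol f\star(\mathbb D\boldsymbol p)$ equals $-\sum_{j=1}^n(f\star\varphi)^{\otimes(j-1)}\hat\otimes(f\star D\varphi)\hat\otimes(f\star\varphi)^{\otimes(n-j)}$.

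Comparing these two sums term by term, the outer slots carry the common factors $(f\star\varphi)$, so the whole identity reduces to the single scalar relation $(Df)\star\varphi=-f\star(D\varphi)$ in $\mathcal S_+$. I would prove this directly from the definitions: from $(Df)\star\varphi\,(s)=\langle Df,T_s\varphi\rangle=-\langle f,D(T_s\varphi)\rangle$ together with the commutation $D\,T_s=T_s D$ (immediate from $\tfrac{d}{dt}\varphi(t+s)=(D\varphi)(t+s)$), one obtains $(Df)\star\varphi\,(s)=-\langle f,T_s(D\varphi)\rangle=-\bigl(f\star(D\varphi)\bigr)(s)$, where the penultimate equality also uses \eqref{fgstarphi}. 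This completes the reduction.

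The step I expect to be the main obstacle is the slotwise factorization of $K^{\otimes n}$ on the mixed symmetric tensors produced by $\mathbb D$. Unlike the pure powers $f^{\otimes n}$ and $\varphi^{\otimes n}$, for which the operators $K^{\otimes n}_{f^{\otimes n}}=K_f^{\otimes n}$ were defined in \eqref{Kfotimesn}, the elements $f^{\otimes(j-1)}\hat\otimes Df\hat\otimes f^{\otimes(n-j)}$ are not pure powers, so I must invoke the multilinear and symmetric extension of the assignment $g_1\hat\otimes\cdots\hat\otimes g_n\mapsto K_{g_1}\hat\otimes\cdots\hat\otimes K_{g_n}$ and verify that it is consistent with the definition of $K^{\otimes}_{\boldsymbol f}$; this is where the bookkeeping lives, while everything else is the short one-dimensional computation above. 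Conceptually, $\mathbb D$ is the infinitesimal generator of the polynomial shift semigroup \eqref{polynomialshift}, and the minus sign in the statement is just the tensorized form of the defining adjoint relation $\langle Df,\varphi\rangle=-\langle f,D\varphi\rangle$ for the generalized derivative, transported through the cross-correlation as in the proof of Theorem \ref{theorCrossCorel}.
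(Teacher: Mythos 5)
Your proposal is correct and takes essentially the same route as the paper's own proof: both reduce by linearity and continuity to the total elements $\boldsymbol f=(f^{\otimes n})$, $\boldsymbol p=(\varphi^{\otimes n})$, compute the $n$-th components slotwise, collapse everything to the scalar relation $(Df)\star\varphi=-f\star(D\varphi)$, and finish by factoring out the tensorized shift. The only difference is that you make explicit two steps the paper leaves tacit --- the commutation $D\,T_s=T_s\,D$ underlying that scalar relation, and the multilinear slotwise action of the cross-correlation on the mixed tensors $f^{\otimes(j-1)}\hat\otimes\,Df\,\hat\otimes f^{\otimes(n-j)}$ produced by $\mathbb D$, which the paper handles implicitly by rewriting these terms as pairings of $f^{\otimes n}$ against shifted test tensors.
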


\begin{proof}
For any $\boldsymbol f=\big(f^{\otimes n}\big)\in\Gamma(\mathcal S'_+)$ with
$f\in\mathcal S'_+$ and
$\boldsymbol p=\big(\varphi^{\otimes n}\big)\in\Gamma(\mathcal S_+)$ with $\varphi\in\mathcal S_+$
we have
\[
\begin{split}
({\mathbb D} \boldsymbol f)\star \boldsymbol p
=&\Big(0,Df\star\varphi,\dots,\sum\limits_{j=1}^n
(f\star\varphi)^{\otimes(j-1)}\hat\otimes\, (Df\star\varphi)\,
\hat\otimes\, (f\star\varphi)^{\otimes(n-j)},\dots\Big)\\
=&-\Big(0,f\star D\varphi,\dots,\sum\limits_{j=1}^n
(f\star\varphi)^{\otimes(j-1)}\hat\otimes\, (f\star D\varphi)\,
\hat\otimes\, (f\star\varphi)^{\otimes(n-j)},\dots\Big)\\
=&-\Big(0,\big\langle f, T_s D\varphi\big\rangle,\dots,\big\langle f^{\otimes n},\sum_{j=1}^n
(T_s\varphi)^{\otimes(j-1)} \hat\otimes\, (T_s D\varphi)\,
\hat\otimes\, (T_s\varphi)^{\otimes(n-j)}\big\rangle,\dots\Big)\\
=&-\Big(0,\big\langle f, T_s D\varphi\big\rangle,\dots,\big\langle f^{\otimes n}, T_s^{\otimes n}\sum_{j=1}^n
\varphi^{\otimes(j-1)} \hat\otimes\, D\varphi\, \hat\otimes\,
\varphi^{\otimes(n-j)}\big\rangle,\dots\Big)\\
=&-\boldsymbol f\star ({\mathbb D}\boldsymbol p).
\end{split}
\]
The proposition is proved.
\end{proof}

\section{Fourier transform of polynomial tempered distributions}\label{sec.alebra}

Since each element of the space $\mathcal S_+$ may be considered
as a function $\varphi\in{L}^1(0,\infty)\cap {L}^2(0,\infty)$, we define
the Fourier transform and its inverse, as follows:
\begin{align*}
&\mathcal F_+:\mathcal S_+\ni\varphi\longmapsto\widehat{\varphi}(\xi):=\int_{\mathbb R_+}
e^{-\mathfrak{i}t\xi}{\varphi}(t)\,dt,\qquad
\xi\in\mathbb R,\\
&\mathcal F_+^{-1}:\widehat{\varphi}\longmapsto\varphi(t)=\frac{1}{2\pi}\int_{\mathbb R}
e^{\mathfrak{i}t\xi}\widehat{\varphi}(\xi)\,d\xi,
\qquad t\in\mathbb R_+.
\end{align*}

Let $\widehat{\mathcal S}_+:=\mathcal F_+[\mathcal S_+]$ stand for
the range of $\mathcal S_+$ under the map $\mathcal F_+$.
It is known \cite{Akhiezer} that $\widehat{\mathcal S}_+\subset{L}^2(\mathbb R)$.
Using the injectivity of $\mathcal F_+$, we endow the space
$\widehat{\mathcal S}_+$
with a topology induced by the topology in $\mathcal S_+$.
Therefore,
$\widehat{\mathcal S}_+$ is a nuclear
$(F)$ space (see \cite{Schaefer}).
For the strong duals the appro\-pri\-ate adjoint transform
$(\mathcal F_+^{-1})'\colon \mathcal S'_+\longmapsto {\widehat{\mathcal S}_+}'$ is well
defined. The mapping
\[\mathcal F'_+:=2\pi (\mathcal F_+^{-1})'\colon\mathcal S'_+\ni
f\longmapsto \widehat f\in\widehat{\mathcal S}_+'\]
is called the generalized Fourier transform of distributions from $\mathcal S_+'$.
The space $\widehat{\mathcal S}'_+$ is a nuclear $(DFS)$ space
as a strong dual of the nuclear $(FS)$ space
$\widehat{\mathcal S}_+$ (see \cite{Schaefer}).

Since delta functional is a unit element in the convolution
algebra $\mathcal S^\prime_+$ (see \cite{Vladimirov}), we obtain
$\widehat{\delta*f}=\widehat{f}=\widehat{f*\delta}$ and
the space $\widehat{\mathcal S}^\prime_+$ is a commutative multiplicative
algebra with the unit
$\widehat\delta$ with respect to the multiplication
$\widehat f\,\cdot\,\widehat h:=\widehat{f*h}$, $f,h\in\mathcal S'_+.$
The following bilinear form
$$
\langle\mathcal F'_+ f, \mathcal F_+\varphi\rangle=
\langle 2\pi (\mathcal F_+^{-1})' f, \mathcal F_+\varphi\rangle=
2\pi\langle f, \mathcal F_+^{-1}\mathcal F_+\varphi\rangle=2\pi\langle f,
\varphi\rangle,
$$
with
$f\in\mathcal S'_+$, $\varphi\in\mathcal S_+$, defines the
new duality $\langle\widehat{\mathcal{S}}'_+,\widehat{\mathcal S}_+\rangle$.

Denote $\Gamma(\widehat{\mathcal S}_+):=
\mathop{\bigoplus\nolimits_{\operatorname{fin}}}\limits_{\hskip-10pt
n\in\mathbb Z_+} \widehat{\mathcal S}_+^{\hat\otimes n}$ and
$\Gamma(\widehat{\mathcal S}'_+):= \TIMES\limits_{n\in\mathbb
Z_+}\widehat{\mathcal S}_+^{\prime\hat\otimes n}$. For any
elements $\boldsymbol {\widehat f}=\big(\widehat f^{\otimes
n}\big)$, $\boldsymbol {\widehat h}=\big(\widehat h^{\otimes
n}\big)\in \Gamma(\widehat{\mathcal S}'_+)$ with $f,h\in\mathcal
S'_+$ we define the operation
\[
\boldsymbol {\widehat f}\,\widehat\circledast\, \boldsymbol {\widehat h}:=
\big((\widehat f\,\cdot\,\widehat h)^{\otimes n}\big)
\]
and extend it to the whole space $\Gamma(\widehat{\mathcal S}'_+)$ by
linearity and continuity. It is obvious, that
$\Gamma(\widehat{\mathcal S}'_+)$ is an algebra relative to the
operation $\widehat\circledast$. Similarly as above, we can induce
this operation on the space $\Gamma(\widehat{\mathcal S}_+)$ that
becomes an algebra too. From \cite[Proposition~2.1]{LopSharTopology}
it follows that there exist the linear topological isomorphisms of
algebras
\[
\widehat\Upsilon:{\mathcal P}(\widehat{\mathcal S}'_+)
\longrightarrow \Gamma(\widehat{\mathcal S}_+) \quad
\text{and}\quad \widehat\Psi:{\mathcal P}'(\widehat{\mathcal
S}'_+) \longrightarrow \Gamma(\widehat{\mathcal S}'_+).
\]

Using Proposition~\ref{propTensStruct} we can extend
the map $\mathcal F_+$ onto the space $\Gamma(\mathcal S_+)$ as
follows. First of all, for any
$\varphi^{\otimes n}\in\mathcal{S}_+^{\hat\otimes n}$
with $\varphi\in\mathcal S_+$ we define the
operation
$\mathcal F_+^{\otimes_n}$ by the relations
\[
\mathcal F_+^{\otimes_n}: \varphi^{\otimes n}\longmapsto \widehat{\varphi}^{\otimes n}
\quad\text{and}\quad \mathcal F_+^{\otimes_0}:=I_{\mathbb C}.
\]
Next, we extend the mapping $\mathcal F_+^{\otimes_n}$ to the whole
space $\mathcal{S}_+^{\hat\otimes n}$ by linearity and continuity, so
$\mathcal F_+^{\otimes_n} \in{L}\big(\mathcal{S}_+^{\hat\otimes n},
\widehat{\mathcal S}_+^{\hat\otimes n}\big)$.  Finally, $\mathcal
F_+^{\otimes}$ is defined to be the mapping
\[
\mathcal F_+^{\otimes}=\big(\mathcal F_+^{\otimes_n}\big) : \Gamma(\mathcal S_+)\ni
\boldsymbol p=\big(p_n\big)\quad \longmapsto\quad
\widehat{\boldsymbol p}:=\big(\widehat p_n\big)
\in\Gamma(\widehat{\mathcal S}_+),
\]
where $\widehat p_n:= \mathcal F_+^{\otimes_n}p_n$.
It is easy to check that $\mathcal F_+^{\otimes}$ is a homomorphism
of the corresponding algebras.

\vspace*{-2pt}
\begin{remark}\label{infinitemanyvar}
Note that $\widehat{\varphi}^{\otimes n}$ for any $n\in\mathbb N$ may be treated as a
function of $n$ variables
$
\mathbb R^n\ni(\xi_1,\dots,\xi_n)\longmapsto
\widehat{\varphi}(\xi_1)\cdot\ldots\cdot\widehat{
\varphi}(\xi_n)\in \mathbb C
$
and may be written in the following way:
\[
\widehat{\varphi}^{\otimes n}(\xi_1,\dots,\xi_n)=
\int_{\mathbb R^n_+}
e^{-\mathfrak{i}(t,\xi)_n}{\varphi}
(t_1)\cdot\ldots\cdot
{\varphi}(t_n)\,dt,
\]
where $(t,\xi)_n:=t_1\xi_1+\dots+t_n\xi_n$, $dt:=dt_1\dots dt_n$.
So, elements of $\Gamma(\widehat{\mathcal S}_+)$
can be considered as functions of infinitely many variables
\begin{equation}\label{eq.infinitemanyvar}
\widehat{\boldsymbol p} : (\xi_1,\dots,\xi_n,\dots)\longmapsto
\big(\widehat p_0,\widehat p_1(\xi_1),\widehat p_2(\xi_2,\xi_3),\dots,
\widehat p_n(\xi_{\mathfrak b_n},\dots,\xi_{\mathfrak e_n}),
\dots\big),
\end{equation}
where $\mathfrak b_n:=\frac{n(n-1)}{2}+1$, $\mathfrak e_n:=\frac{n(n+1)}{2}$.
But we note that actually each $\widehat{\boldsymbol p}\in\Gamma(\widehat{\mathcal S}_+)$
depends on a finite (depending on $\widehat{\boldsymbol p}$) number of variables,
because for each $\widehat{\boldsymbol p}$ the sequence in the right-hand side of
\eqref{eq.infinitemanyvar} is finite.
\end{remark}
\vspace*{-4pt}

Define the operator $\mathcal F^{\prime\otimes}_+$ as follows
\[
\mathcal F^{\prime\otimes}_+:=(\mathcal F^{\prime\otimes_n}_+):\Gamma(\mathcal S'_+)
\ni\boldsymbol f=\big(f_n\big)
\quad\longmapsto\quad
\boldsymbol {\widehat f}:=\big(\widehat f_n\big)
\in\Gamma(\widehat{\mathcal S}'_+),
\]
where
$\widehat f_n:=\mathcal F^{\prime\otimes_n}_+f_n
\in\widehat{\mathcal S}_+^{\prime\hat\otimes n}$,
$\mathcal F^{\prime\otimes_0}_+:=I_{\mathbb C}$,
and each operator
$\mathcal F^{\prime\otimes_n}_+:\mathcal S_+^{\prime\hat\otimes n}
\longrightarrow
\widehat{\mathcal S}_+^{\prime\hat\otimes n}$,
$n\in\mathbb N$, is defined as a linear and continuous extension of the map
$f^{\otimes n} \longmapsto (\mathcal F'_+ f)^{\otimes n}$
with $f\in \mathcal S'_+$.

\vspace*{-4pt}
\begin{remark}\label{rem2}
  From Remark \ref{infinitemanyvar} it follows that $\widehat
  f_n\in\widehat{\mathcal S}_+^{\prime\hat\otimes n}\simeq
  (\widehat{\mathcal S}_+^{\hat\otimes n})'$ is a functional of $n$
  ``variables''
\[\widehat p_n(\xi_1,\dots,\xi_n)\longmapsto \langle \widehat f_n, \widehat p_n
\rangle:=\langle
\widehat f_n(\xi_1,\dots,\xi_n),
\widehat p_n(\xi_1,\dots,\xi_n)\rangle\in\mathbb C
\]
with $\widehat p_n \in\widehat{\mathcal S}_+^{\hat\otimes n}$. So,
any $\boldsymbol {\widehat f}=\big(\widehat f_n\big)\in
\Gamma(\widehat{\mathcal S}'_+)$ we consider as a functional of
infinitely many ``variables'' in the following sense (cf.
\eqref{eq.infinitemanyvar}):
\begin{equation*}
\begin{array}{cccc}
\displaystyle \boldsymbol {\widehat f}:
&
\displaystyle\Gamma(\widehat{\mathcal S}_+)
&
\longrightarrow
&
\displaystyle\mathbb C
\\[2pt]
&
\displaystyle \widehat{\boldsymbol p}(\xi_1,\dots,\xi_n,\dots)=
\big(\widehat p_n(\xi_{\mathfrak b_n},\dots,\xi_{\mathfrak e_n})\big)
&
\longmapsto
&
\displaystyle \langle \boldsymbol {\widehat f}, \widehat{\boldsymbol p} \rangle:=
\sum_{n\in\mathbb Z_+} \langle \widehat f_n, \widehat p_n \rangle.
\end{array}
\end{equation*}
\end{remark}

\section{Infinite parameter operator semigroups}\label{OC}

Let $E$ be a complex Banach space.  Let $\mathbf A:=(\mathbf A_1,
\mathbf A_2, \dots, \mathbf A_n, \dots)$ be a countable system of
operators, acting on $E$.  It is convenient for us to rewrite this
system as follows.  Denote $A_n:=(\mathbf A_{\mathfrak b_n}, \dots,
\mathbf A_{\mathfrak e_n})$, where $\mathfrak
b_n:=\frac{n(n-1)}{2}+1$, $\mathfrak e_n:=\frac{n(n+1)}{2}$.  Let by
definition $A_{0}:=\emptyset$.  Then the countable system of operators
$\mathbf A$ can be represented as $\mathbf A=(A_0, A_1, A_2, \dots,
A_n, \dots)$ or $\mathbf A=\big(A_n\big)$ for short.

For any $t\in\mathbb R^n_+$ let us denote $t\cdot A_n:=t_1 \mathbf
A_{\mathfrak b_n}+\dots+t_n \mathbf A_{\mathfrak e_n}.$ Let $A_n$
be a generator (see \cite{ButzerBerens,HillePhillips57}) of
$n$-parameter $C_0$-semigroup $\mathbb R^n_+\ni t\longmapsto
e^{-\mathfrak{i}t\cdot A_n}\in L(E)$, satisfying the condition
\begin{equation}\label{contrcond}
\sup_{t\in\mathbb R^n_+}\|e^{-\mathfrak{i}t\cdot A_n}\|_{ L(E)}\le1.
\end{equation}
In what follows we assume that operators of the set $A_n$ for all
$n\in\mathbb N$ commute with each other. Note that in this case
the semigroup can be represented (see
\cite{ButzerBerens,HillePhillips57}) as a composition of commuting
one-parameter marginal semigroups
\[
e^{-\mathfrak{i}t\cdot A_n}=e^{-\mathfrak{i}t_1 \mathbf A_{\mathfrak b_n}}
\circ\dots\circ
e^{-\mathfrak{i}t_n \mathbf A_{\mathfrak e_n}}.
\]

Let $\mathcal G$ be the set of countable systems of such generators.
For all $n\in\mathbb N$ let $\mathcal G_n$ be a set of collections of
some $n$ generators of one-parameter $C_0$-semigroups satisfying the
condition \eqref{contrcond}, and let $\mathcal G_0:=\{\emptyset\}$ by
definition.

Define the mapping
\begin{equation}\label{mapL}
\mathcal{L}:=({\mathcal L}_n)\colon
\Gamma(\mathcal S_+)\ni \boldsymbol p=\big(p_n\big)
\quad\longmapsto\quad
\widetilde{\boldsymbol p}:=\sum_{n\in\mathbb Z_+}
\widetilde p_n\in\widetilde{\mathcal S},
\end{equation}
where $\widetilde{\mathcal S}:=\sum_{n\in\mathbb Z_+}
\widetilde{\mathcal S}_n$. Here each
$\widetilde{\mathcal S}_n$, $n\in\mathbb Z_+$, is defined to be the space
of functions
\begin{equation}\label{opfuncn}
\widetilde p_n:\mathcal G_n\ni A_n
\longmapsto
\widetilde p_n(A_n):=\int_{\mathbb R^n_+}
e^{-\mathfrak{i}t\cdot A_n}p_n(t)\,dt
\in L(E)
\end{equation}
for $n\in\mathbb N$, and $\widetilde p_0\colon\mathcal G_0\ni A_0
\longmapsto\widetilde p_0(A_0):=p_0I_E\in L(E)$, where the integral is
understood in the sense of Bochner.

If the assumption \eqref{contrcond} holds, then all the mappings
${\mathcal L}_n: p_n\longmapsto \widetilde p_n$, $n\in\mathbb Z_+$,
are isomorphisms by virtue of \cite[Theorem 15.2.1]{HillePhillips57}.
Indeed, the semigroups $\{e^{-\mathfrak{i}(\lambda, t)}I_E:t\in\mathbb R^n_+\}$ with
$-\mathop{\rm Im}\lambda\in\mathop{\rm int}\mathbb{R}^n_+$ satisfy the condition \eqref{contrcond}.
Therefore, their generators $(-\mathfrak{i}\lambda_1 I_E,\dots,-\mathfrak{i}\lambda_n I_E)$
belong to $\mathcal G_n$.
Note that
\[
\widetilde p_n(-\mathfrak{i}\lambda_1 I_E,\dots,-\mathfrak{i}\lambda_n I_E)=\int_{\mathbb R^n_+}
e^{-\lambda\cdot t} p_n(t)\,dt
\]
is the Laplace transform of a function
$p_n\in\mathcal{S}_+^{\hat\otimes n}$.
Particularly, it follows that
if $\widetilde p_n\equiv0$, then $p_n\equiv0$,
i.e., $\mathop{\rm Ker}\mathcal{L}_n=\{0\}$,
$n\in\mathbb N$. Hence, $\mathop{\rm Ker}\mathcal{L}=\{0\}$ and the map
$\mathcal{L}$ is an isomorphism.

\begin{remark}\label{rem41}
The mapping
$\mathcal{L}\colon
\Gamma(\mathcal S_+) \longrightarrow \widetilde{\mathcal S}$
is a homomorphism of the algebra
$\big\{\Gamma(\mathcal S_+),\circledast\big\}$
and an algebra of operator valued functions defined on
$\mathcal G$. On the other hand, the map
$\mathcal F_+^{\otimes} : \Gamma(\mathcal S_+) \longrightarrow
\Gamma(\widehat{\mathcal S}_+)$ is a homomorphism too.
So, we can treat the mapping
\[
\mathcal{L}\circ(\mathcal F_+^{\otimes})^{-1}\colon
\Gamma(\widehat{\mathcal S}_+)
\longrightarrow
\widetilde{\mathcal S}
\]
as an ``elementary'' functional calculus.  In other words, we
understand the operator $\widetilde{\boldsymbol p}(\mathbf A)
=\sum_{n\in\mathbb Z_+}\widetilde p_n(A_n) \in L(E)$ as a ``value'' of
a function $\widehat{\boldsymbol p}$ of infinitely many variables (see
\eqref{eq.infinitemanyvar}) at a countable system $\mathbf A:=(\mathbf
A_1, \mathbf A_2, \dots, \mathbf A_n, \dots)\in\mathcal G$ of
generators of contraction $C_0$-semigroups.
\end{remark}

Consider the one-parameter semigroup $\widetilde T^{\otimes}:\mathbb
R_+\ni s\longmapsto \widetilde T_{s}^{\otimes}\in
L\big(\widetilde{\mathcal S}\,\big)$ on the space $\widetilde{\mathcal
  S}$, where
\begin{equation*}
\widetilde T_{s}^{\otimes}:=\big(\widetilde T_{s}^{\otimes n}\big)\colon
\widetilde{\boldsymbol p}=\sum_{n\in\mathbb Z_+}\widetilde p_n
\quad\longmapsto\quad
\widetilde T_{s}^{\otimes}\widetilde{\boldsymbol p}:=
\sum_{n\in\mathbb Z_+}
\widetilde T_{s}^{\otimes n}\widetilde p_n.
\end{equation*}
The function $\widetilde T_{s}^{\otimes n}\widetilde p_n\in\widetilde{\mathcal S}_n$
is defined to be the map
\begin{equation*}
\widetilde T_{s}^{\otimes n}\widetilde p_n\colon
A_n
\longmapsto
\widetilde T_{s}^{\otimes n}\widetilde p_n (A_n):=\int_{\mathbb R^n_+}
e^{-\mathfrak{i}t\cdot A_n}p_n(t+s)\,dt.
\end{equation*}
Here the function $\widetilde p_n$ of operator argument is defined by \eqref{opfuncn}.

Using Bochner's integral properties  (see
\cite[3.7]{HillePhillips57}), we obtain that for any $\boldsymbol
p=\big(p_n\big)\in \Gamma(\mathcal S_+)$ with
$p_n=\varphi^{\otimes n}\in{\mathcal S}_+^{\hat\otimes n}$,
$\varphi\in{\mathcal S}_+$, the following equalities
\[
\begin{split}
\widetilde{T_{s}^{\otimes}\boldsymbol p}(\mathbf A)=
\mathcal{L}\big[\big(T_{s}^{\otimes n}p_n\big)\big](\mathbf A)&=
\mathcal{L}\big[\big((T_s\varphi)^{\otimes n}\big)\big](\mathbf A)\\
&=I_E+\sum_{n\in\mathbb N}\int_{\mathbb R^n_+}
e^{-\mathfrak{i}t\cdot A_n}{\varphi}
(t_1+s)\cdot\ldots\cdot{\varphi}(t_n+s)\,dt\\
&=\widetilde p_0(A_0)+\sum_{n\in\mathbb N}
\widetilde T_{s}^{\otimes n}\widetilde p_n(A_n)
=\widetilde T_{s}^{\otimes}\widetilde{\boldsymbol p}(\mathbf A)
\end{split}
\]
hold for all $s\in\mathbb R_+$ and
$\mathbf A:=\big(A_n\big)\in\mathcal G$.

Hence, the operator $\widetilde T_{s}^{\otimes}$ can be represented
as follows:
$\widetilde T_{s}^{\otimes}=\mathcal{L}\circ T_{s}^{\otimes}\circ\mathcal{L}^{-1}$.
Continuity of the mappings $T_{s}^{\otimes}$ and $\mathcal{L}$ as well as
openness of $\mathcal{L}$ imply that
the semigroup $\widetilde T^\otimes:\mathbb R_+\ni
s\longmapsto \widetilde T_s^\otimes\in{L}\big(\widetilde{\mathcal S}\,\big)$
has the $C_0$-property.

We define commutant of the semigroup $\widetilde T^\otimes$ to be the set
\[
\big[\widetilde T^\otimes\big]^c:=\big\{\widetilde T\in{L}\big(\widetilde{\mathcal S}\,\big)
\colon \widetilde T\circ\widetilde T^\otimes_s=\widetilde T^\otimes_s\circ \widetilde T, \forall
s\in\mathbb R_+ \big\}.
\]

Define the mapping
\begin{equation}\label{PhiA}
\varPhi:=\big(\varPhi_{n}\big) :
\Gamma({\mathcal S}'_+)\ni
\boldsymbol f=\big(f_n\big)
\quad\longmapsto\quad
\varPhi_{\boldsymbol f}:=\sum_{n\in\mathbb Z_+}
\varPhi_{f_n}\in
 L\big(\widetilde{\mathcal S}\,\big),
\end{equation}
where
${f}_n:={f}^{\otimes n}\in\mathcal S_+^{\prime\hat\otimes n}$,
$f\in\mathcal S'_+$.
Here $\varPhi_{f_n}\in L\big(\widetilde{\mathcal S}_{n}\,\big)$,
$n\in\mathbb Z_+$, is defined by the following formulas:
$(\varPhi_{f_0}\widetilde p_0)(A_0):=I_E$ and
\[
\varPhi_{f_n}: \widetilde p_n \longmapsto \widetilde
q_n:=\varPhi_{f_n} \widetilde p_n,\quad \text{where}\quad
\widetilde q_n(A_n):=\int_{\mathbb R^n_+} e^{-\mathfrak{i}t\cdot
A_n}K_f^{\otimes n}p_n(t)\,dt,\quad n\in\mathbb N.
\]
Here the function $\widetilde p_n$ of operator argument is defined by
\eqref{opfuncn}, and the operator $K_f^{\otimes n}$ is defined by
\eqref{Kotimes} and \eqref{Kfotimesn}.

\begin{theorem}\label{main}
  The map $\varPhi$ defined by \eqref{PhiA} is an algebraic
  isomorphism of the algebra $\big\{\Gamma({\mathcal
    S}'_+),\circledast\big\}$ and the subalgebra in the commutant
  $\big[\widetilde T^\otimes\big]^c$ of operators of the form
  $\widetilde K^{\otimes}=\mathcal{L}\circ
  K^{\otimes}\circ\mathcal{L}^{-1}\in{L}\big(\widetilde{\mathcal
    S}\,\big)$, where $K\in{L}({\mathcal S}_+)$.  In particular, the
  equality $\varPhi_{\boldsymbol f\circledast \boldsymbol g}=
  \varPhi_{\boldsymbol f}\circ\varPhi_{\boldsymbol g}$ holds for all
  $\boldsymbol f, \boldsymbol g\in\Gamma({\mathcal S}'_+)$ and
  $\varPhi_{\boldsymbol\delta}$ is the identity in $
  L\big(\widetilde{\mathcal S}\,\big)$, where
  $\boldsymbol\delta=\big(\delta^{\otimes n}\big)$.

  Moreover, differential the property
\begin{equation}\label{diffprop}
\varPhi_{{\mathbb D} \boldsymbol f}\widetilde {\boldsymbol p}=
-\varPhi_{\boldsymbol f}\widetilde{{\mathbb D}\boldsymbol p}
\end{equation}
holds for any $\boldsymbol f\in\Gamma({\mathcal S}'_+)$ and
$\boldsymbol p\in\Gamma(\mathcal S_+)$.
\end{theorem}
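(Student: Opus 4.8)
The plan is to recognize $\varPhi_{\boldsymbol f}$ as the conjugate of the cross-correlation operator $K_{\boldsymbol f}^{\otimes}$ by the isomorphism $\mathcal L$, and then to transport every required property from Theorem~\ref{theorCrossCorel} and Proposition~\ref{prop33} through this conjugation. The key identity to establish first is
\[
\varPhi_{\boldsymbol f}=\mathcal L\circ K_{\boldsymbol f}^{\otimes}\circ\mathcal L^{-1},\qquad \boldsymbol f\in\Gamma(\mathcal S'_+).
\]
Indeed, comparing the definition \eqref{PhiA} of $\varPhi_{f_n}$ with the definition \eqref{opfuncn} of $\mathcal L_n$, the function $\widetilde q_n=\varPhi_{f_n}\widetilde p_n$ is precisely $\mathcal L_n\big(K_{f_n}^{\otimes n}p_n\big)$, since its value at $A_n$ is $\int_{\mathbb R_+^n}e^{-\mathfrak{i}t\cdot A_n}\big(K_{f_n}^{\otimes n}p_n\big)(t)\,dt$. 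Hence on each homogeneous component $\varPhi_{f_n}\circ\mathcal L_n=\mathcal L_n\circ K_{f_n}^{\otimes n}$, and summing over $n$ yields the displayed identity. As in the proof of Theorem~\ref{theorCrossCorel}, it suffices to verify this on the total subsets \eqref{totalsystem} and extend by linearity and continuity.

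Granting this identity, the algebraic statements are immediate. By Theorem~\ref{theorCrossCorel} one has $K_{\boldsymbol f\circledast\boldsymbol g}^{\otimes}=K_{\boldsymbol f}^{\otimes}\circ K_{\boldsymbol g}^{\otimes}$, whence
\[
\varPhi_{\boldsymbol f\circledast\boldsymbol g}
=\mathcal L\circ K_{\boldsymbol f}^{\otimes}\circ K_{\boldsymbol g}^{\otimes}\circ\mathcal L^{-1}
=\big(\mathcal L\circ K_{\boldsymbol f}^{\otimes}\circ\mathcal L^{-1}\big)\circ\big(\mathcal L\circ K_{\boldsymbol g}^{\otimes}\circ\mathcal L^{-1}\big)
=\varPhi_{\boldsymbol f}\circ\varPhi_{\boldsymbol g},
\]
so $\varPhi$ is a homomorphism. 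Injectivity follows because $\mathcal L$ is an isomorphism and $\boldsymbol f\mapsto K_{\boldsymbol f}^{\otimes}$ is injective by Theorem~\ref{theorCrossCorel}. For the unit, $\delta\star\varphi=\varphi$ gives $K_\delta=I_{\mathcal S_+}$, hence $K_{\boldsymbol\delta}^{\otimes}=I$ and $\varPhi_{\boldsymbol\delta}=\mathcal L\circ I\circ\mathcal L^{-1}$ is the identity.

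To identify the range I would use $\widetilde T_s^{\otimes}=\mathcal L\circ T_s^{\otimes}\circ\mathcal L^{-1}$, already established before the statement. Since $K_{\boldsymbol f}^{\otimes}\in\big[T^{\otimes}\big]^c$, conjugating the relation $K_{\boldsymbol f}^{\otimes}\circ T_s^{\otimes}=T_s^{\otimes}\circ K_{\boldsymbol f}^{\otimes}$ by $\mathcal L$ gives $\varPhi_{\boldsymbol f}\circ\widetilde T_s^{\otimes}=\widetilde T_s^{\otimes}\circ\varPhi_{\boldsymbol f}$, so $\varPhi_{\boldsymbol f}\in\big[\widetilde T^{\otimes}\big]^c$ and is of the form $\widetilde K^{\otimes}=\mathcal L\circ K^{\otimes}\circ\mathcal L^{-1}$. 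Conversely, Theorem~\ref{theorCrossCorel} identifies the range of $\boldsymbol f\mapsto K_{\boldsymbol f}^{\otimes}$ with the whole commutant $\big[T^{\otimes}\big]^c$, that is, with all $K^{\otimes}$ commuting with the $T_s^{\otimes}$; conjugation by $\mathcal L$ matches these bijectively with the operators $\widetilde K^{\otimes}$ lying in $\big[\widetilde T^{\otimes}\big]^c$, which is exactly the subalgebra described.

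Finally, for the differential property \eqref{diffprop} I would rewrite Proposition~\ref{prop33} as $K_{{\mathbb D}\boldsymbol f}^{\otimes}\boldsymbol p=-K_{\boldsymbol f}^{\otimes}({\mathbb D}\boldsymbol p)$, using $\boldsymbol f\star\boldsymbol p=K_{\boldsymbol f}^{\otimes}\boldsymbol p$. Writing $\widetilde{\boldsymbol p}=\mathcal L\boldsymbol p$ and $\widetilde{{\mathbb D}\boldsymbol p}=\mathcal L({\mathbb D}\boldsymbol p)$ and applying the key identity,
\[
\varPhi_{{\mathbb D}\boldsymbol f}\widetilde{\boldsymbol p}
=\mathcal L\,K_{{\mathbb D}\boldsymbol f}^{\otimes}\boldsymbol p
=-\mathcal L\,K_{\boldsymbol f}^{\otimes}({\mathbb D}\boldsymbol p)
=-\varPhi_{\boldsymbol f}\widetilde{{\mathbb D}\boldsymbol p}.
\]
The main obstacle is precisely the first step: justifying the componentwise identity $\varPhi_{f_n}\circ\mathcal L_n=\mathcal L_n\circ K_{f_n}^{\otimes n}$ rigorously, which requires checking that the Bochner integral in \eqref{opfuncn} interchanges with the cross-correlation operators on the total set and then extending by continuity. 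Once this conjugation formula is in hand, every remaining assertion is a formal transport through $\mathcal L$.
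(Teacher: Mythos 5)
Your proposal is correct and follows essentially the same route as the paper: both rest on the conjugation identity $\varPhi_{\boldsymbol f}=\mathcal{L}\circ K_{\boldsymbol f}^{\otimes}\circ\mathcal{L}^{-1}$ (the paper's computation \eqref{eq18}), transport multiplicativity and the range description through $\mathcal{L}$ from Theorem~\ref{theorCrossCorel}, and derive \eqref{diffprop} from Proposition~\ref{prop33} exactly as you do. Your only deviations are cosmetic streamlinings --- obtaining membership in $\big[\widetilde T^\otimes\big]^c$ by conjugating the commutation relation rather than by the paper's explicit Bochner-integral computation, and getting the unit from $K_\delta=I_{\mathcal S_+}$ directly instead of from the homomorphism property --- and the ``main obstacle'' you flag is in fact immediate, since the definition \eqref{PhiA} already places $K_f^{\otimes n}p_n$ inside the integral defining $\mathcal L_n$, so no interchange of integral and operator is needed.
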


\begin{proof}
  For any $\boldsymbol f=\big(f_n\big)\in\Gamma({\mathcal S}'_+)$,
  where $f_n:=f^{\otimes n}$ with $f\in\mathcal S'_+$, and
  $\boldsymbol p=\big(p_n\big)\in \Gamma({\mathcal S}_+)$ the
  equalities
\begin{equation}\label{eq18}
\begin{split}
(\varPhi_{\boldsymbol f} \widetilde{\boldsymbol p})(\mathbf A)=
\sum_{n\in\mathbb Z_+}(\varPhi_{f_n} \widetilde p_n)(A_n)
&=I_E+\sum_{n\in\mathbb N}\int_{\mathbb R^n_+}
e^{-\mathfrak{i}t\cdot A_n}K_f^{\otimes n}p_n(t)\,dt\\
&=\mathcal{L}\big[\big(K_f^{\otimes n}p_n\big)\big](\mathbf A)
=\widetilde{K_{\boldsymbol f}^{\otimes}\boldsymbol p}(\mathbf A)
\end{split}
\end{equation}
are valid for all $\mathbf A:=\big(A_n\big)\in\mathcal G$. It follows
that the map $\varPhi$ can be represented in the form
$\varPhi_{\boldsymbol f}= \mathcal{L}\circ K_{\boldsymbol
  f}^{\otimes}\circ\mathcal{L}^{-1}$. Continuity of the mappings
$K_{\boldsymbol f}^{\otimes}$ and $\mathcal{L}$ as well as openness of
$\mathcal{L}$ imply that $\varPhi_{\boldsymbol f}\in
L\big(\widetilde{\mathcal S}\,\big)$ for all $\boldsymbol
f\in\Gamma({\mathcal S}'_+)$. It follows that the equalities
\[
\begin{split}
(\varPhi_{\boldsymbol f}\widetilde T^\otimes_s\widetilde{\boldsymbol p})(\mathbf A)=
\sum_{n\in\mathbb Z_+}
(\varPhi_{f_n}\widetilde T_{s}^{\otimes n}\widetilde p_n)(A_n)&=
I_E+\sum_{n\in\mathbb N}\int_{\mathbb R^n_+}
e^{-\mathfrak{i}t\cdot A_n}K_f^{\otimes n}T_{s}^{\otimes n}p_n(t)\,dt
\\
&=I_E+\sum_{n\in\mathbb N}\int_{\mathbb R^n_+}
e^{-\mathfrak{i}t\cdot A_n}T_{s}^{\otimes n}K_f^{\otimes n}p_n(t)\,dt\\
&=I_E+\sum_{n\in\mathbb N}\widetilde T_{s}^{\otimes n}\int_{\mathbb R^n_+}
e^{-\mathfrak{i}t\cdot A_n}K_f^{\otimes n}p_n(t)\,dt\\
&=\sum_{n\in\mathbb Z_+}
(\widetilde T_{s}^{\otimes n}\varPhi_{f_n}\widetilde p_n)(A_n)=
(\widetilde T^\otimes_s\varPhi_{\boldsymbol f}\widetilde{\boldsymbol p})(\mathbf A)
\end{split}
\]
hold for all $s\in\mathbb R_+$, $\widetilde{\boldsymbol p}=
\sum_{n\in\mathbb Z_+} \widetilde p_n\in\widetilde{\mathcal S}$
and
$\mathbf A:=\big(A_n\big)\in\mathcal G$.
Hence, for all $\boldsymbol {f}\in\Gamma({\mathcal S}'_+)$
the operator $\varPhi_{\boldsymbol f}$ belongs to the commutant
$\big[\widetilde T^\otimes\big]^c$.

Conversely, let $\widetilde K^{\otimes}=\mathcal{L}\circ
K^{\otimes}\circ\mathcal{L}^{-1}\in{L}\big(\widetilde{\mathcal
  S}\,\big)$ with $K\in{L}({\mathcal S}_+)$ belong to the commutant
$\big[\widetilde T^\otimes\big]^c$. Then
\[
\begin{split}
\mathcal{L}\circ
K^{\otimes}\circ T_s^{\otimes}\circ\mathcal{L}^{-1}=&
\mathcal{L}\circ
K^{\otimes}\circ\mathcal{L}^{-1}\circ\mathcal{L}\circ T_s^{\otimes}\circ\mathcal{L}^{-1}=
\widetilde K^{\otimes}\circ \widetilde T_s^{\otimes}
=\widetilde T_s^{\otimes}\circ \widetilde K^{\otimes}\\
=&\mathcal{L}\circ T_s^{\otimes}
\circ\mathcal{L}^{-1}\circ\mathcal{L}\circ K^{\otimes}\circ\mathcal{L}^{-1}=\mathcal{L}\circ
T_s^{\otimes}\circ K^{\otimes} \circ\mathcal{L}^{-1},
\end{split}
\]
therefore the operator $K^{\otimes}$ belongs to the commutant of
the semigroup $T_s^{\otimes}$. From the proof of Theorem
\ref{theorCrossCorel} it follows that there exists a unique
$f\in{\mathcal S}'_+$ such that $K=K_f$ and
$K^{\otimes}=K_{\boldsymbol f}^{\otimes}$ with $\boldsymbol
f=\big(f^{\otimes n}\big)$. Hence, $\widetilde
K^{\otimes}=\widetilde K_{\boldsymbol f}^{\otimes}$.

The proved above property, $K_{\boldsymbol f\circledast\boldsymbol
  g}=K_{\boldsymbol f}\circ K_{\boldsymbol g}$, implies the
equality $K_{\boldsymbol f\circledast \boldsymbol
  g}^{\otimes}=K_{\boldsymbol f}^{\otimes}\circ K_{\boldsymbol
  g}^{\otimes}$. Therefore,
\[
\begin{split}
\varPhi_{\boldsymbol f\circledast \boldsymbol g}=
\mathcal{L}\circ K_{\boldsymbol f\circledast \boldsymbol g}^{\otimes}\circ\mathcal{L}^{-1}&=
\mathcal{L}\circ K^{\otimes}_{\boldsymbol f}\circ
K^{\otimes}_{\boldsymbol g}\circ\mathcal{L}^{-1}\\
&=\mathcal{L}\circ K^{\otimes}_{\boldsymbol f}\circ\mathcal{L}^{-1}\circ\mathcal{L}\circ
K^{\otimes}_{\boldsymbol g}\circ\mathcal{L}^{-1}=
\varPhi_{\boldsymbol f}\circ
\varPhi_{\boldsymbol g}.
\end{split}
\]

As a consequence, we obtain the equalities
$\varPhi_{\boldsymbol\delta}\circ \varPhi_{\boldsymbol f}=
\varPhi_{{\boldsymbol\delta}\circledast \boldsymbol f}=
\varPhi_{\boldsymbol f}= \varPhi_{\boldsymbol f\circledast
  {\boldsymbol\delta}}= \varPhi_{\boldsymbol f}\circ
\varPhi_{\boldsymbol\delta}$, so, $\varPhi_{\boldsymbol\delta}\in
L\big(\widetilde{\mathcal S}\,\big)$ acts as the identity operator.

It remains to prove the differential property \eqref{diffprop}.
From \eqref{eq18}
it follows $\varPhi_{\boldsymbol f}\widetilde{\boldsymbol p}=
\widetilde{\boldsymbol f\star\boldsymbol p}$. So, using the Proposition
\ref{prop33}, we obtain
\[
\varPhi_{{\mathbb D}\boldsymbol f}\widetilde{\boldsymbol p}=
\widetilde{({\mathbb D}\boldsymbol f)\star\boldsymbol p}=
-\widetilde{\boldsymbol f\star ({\mathbb D}\boldsymbol p})=
-\varPhi_{\boldsymbol f}\widetilde{{\mathbb D}\boldsymbol p}.
\]
Thus, the theorem is proved.
\end{proof}

\begin{remark}
  For any fixed $\boldsymbol p\in\Gamma({\mathcal S}_+)$ the map
  $\Gamma({\mathcal S}'_+)\ni \boldsymbol {f} \longmapsto
  \varPhi_{\boldsymbol f}\widetilde{\boldsymbol
    p}\in\widetilde{\mathcal S}$ is a homomorphism of the algebra
  $\big\{\Gamma({\mathcal S}'_+),\circledast\big\}$ and the algebra of
  operator-valued functions defined on $\mathcal G$.  Therefore we can
  treat this map as a functional calculus in the algebra of polynomial
  tempered distributions.  It is easy to see that a function
  $\varPhi_{\boldsymbol f}\widetilde{\boldsymbol p}$ of operator
  argument can be represented as $\varPhi_{\boldsymbol
    f}\widetilde{\boldsymbol p}= \widetilde{\boldsymbol f\star
    \boldsymbol p}$ (see \eqref{mapL}).  From \eqref{eq18} it follows
  that the operator $\varPhi_{\boldsymbol f}\widetilde{\boldsymbol
    p}(\mathbf A)= \widetilde{\boldsymbol f\star \boldsymbol
    p}(\mathbf A)\in L(E)$ can be understood as a ``value'' of a
  function $\widehat{\boldsymbol f\star \boldsymbol p}$ of infinite
  many variables at a countable system $\mathbf A:=(\mathbf A_1,
  \mathbf A_2, \dots, \mathbf A_n, \dots)\in\mathcal G$ of generators
  of contraction $C_0$-semigroups.
\end{remark}

\begin{example}
  Let us consider the case of a countable set of second derivative
  operators.  Let $H_n:=L^2_{sym}(\mathbb R^n)\simeq{L^2(\mathbb
    R)}^{\hat\otimes n}$, $n\in\mathbb N$, be the space of complex
  valued square integrable symmetric functions
  $y(\xi)=y(\xi_1,\dots,\xi_n)$.  Set $H_0:=\mathbb C$.  It is known
  that the symmetric Fock space $\displaystyle
  H:=\bigoplus_{n\in\mathbb Z_+}H_n$ is a Hilbert space (see
  e.g. \cite{QuantumField}).  As above, let $\mathfrak
  b_n=\frac{n(n-1)}{2}+1$, $\mathfrak e_n=\frac{n(n+1)}{2}$.  Define
  the operators $\mathbf D_{n,m}^2: H\longrightarrow H$, $n\in\mathbb
  N$, $\mathfrak b_n\le m\le \mathfrak e_n$, as follows
\[
\mathbf D_{n,m}^2:=0_{H_0}\otimes\dots\otimes\, 0_{H_{n-1}}\otimes\,
\frac{\partial^2}{\partial\xi_m^2}
\otimes\, 0_{H_{n+1}}\otimes\dots,
\]
where $0_{H_n}$, $n\in\mathbb Z_+$, denote zero operators of the corresponding spaces.

Let us define an ``elementary'' functional calculus in the algebra of
polynomial test functions for the countable set of operators
\[
\mathbf D^2:=(\mathbf D_{1,1}^2, \mathbf D_{2,1}^2, \mathbf D_{2,2}^2,\dots,
\mathbf D_{n,\mathfrak b_n}^2,\dots,\mathbf D_{n,\mathfrak e_n}^2 \dots).
\]
Let $D^2_n:=(\mathbf D_{n,\mathfrak b_n}^2, \dots, \mathbf D_{n,\mathfrak e_n}^2)$.
It is easy to see that $D_n^2$, $n\in\mathbb N$,
generates the semigroup
\[
\mathbb R^n_+\ni t=(t_1,\dots,t_n)\longmapsto
e^{-\mathfrak{i}t\cdot D_n^2}
\in L(H),
\]
where
\[
e^{-\mathfrak{i}t\cdot D_n^2}
:=I_{H_0}\otimes\ldots\otimes\, I_{H_{\mathfrak b_n-1}}\otimes\,
e^{-\mathfrak{i}t_1\frac{\partial^2}{\partial\xi_{\mathfrak b_n}^2}}
\circ\ldots\circ
e^{-\mathfrak{i}t_n\frac{\partial^2}{\partial\xi_{\mathfrak e_n}^2}}
\otimes\,
I_{H_{\mathfrak e_n+1}}\otimes\ldots
\]

Denote
\[\mathfrak{g}_n(t,\zeta):=
\prod\limits_{j=1}^n\dfrac{1}{\sqrt{4\pi t_j}}e^{-\frac{\zeta_j^2}{4t_j}}.
\]
From \cite[Example 2]{LopSharSMJ} it follows that
the semigroup $e^{-\mathfrak{i}t\cdot D_n^2}$
acts as 
\[
e^{-\mathfrak{i}t\cdot D_n^2}y=
\big(y_0,\dots,y_{n-1},\mathfrak{g}_n(-\mathfrak{i}t,\,\cdot\,)*y_n,
y_{n+1},\dots\big)
\]
for any $y=(y_0,y_1,\dots,y_n,\dots)\in H$.

Let $\boldsymbol p=\big(p_n\big)\in\Gamma(\mathcal S_+)$ be given.
If we ``substitute'' the countable set $\mathbf D^2$ of operators instead
of variables of a function $\widehat{\boldsymbol p}$
(see \eqref{eq.infinitemanyvar})  we obtain the operator
acting as 
\[
\begin{split}
\widetilde{\boldsymbol p}(\mathbf D^2)y(\xi_1,\xi_2,\dots)=&y_0+\sum_{n\in\mathbb N}
\widetilde p_n(D_n^2)y_n(\xi_{\mathfrak{b}_n},\dots,\xi_{\mathfrak{e_n}})\\
=&y_0+\sum_{n\in\mathbb N}
\int_{\mathbb R^n_+}
(\mathfrak{g}_n(-\mathfrak{i}t,\,\cdot\,)*y_n)(\xi_{\mathfrak{b}_n},\dots,\xi_{\mathfrak{e_n}})p_n(t)\,dt,
\end{split}
\]
where $y(\xi_1,\xi_2,\dots)=
\big(y_0,y_1(\xi_1),y_2(\xi_2,\xi_3),\dots,y_n(\xi_{\mathfrak{b}_n},\dots,
\xi_{\mathfrak{e}_n}),\dots\big)\in H$
is a function of infinite many variables.
\end{example}

{\it Acknowledgments}. The author would like to thank the referee
for valuable comments which helped to improve the manuscript.

\end{document}